\newcommand{\D}{\mathbb{D}}
\newcommand{\SL}{{\mathcal{L}}}
\newcommand{\T}{\mathbb{T}}
\newcommand{\Z}{\mathbb{Z}}
\newcommand{\R}{\mathbb{R}}
\renewcommand{\S}{\mathbb{S}}
\newcommand{\Op}{\operatorname{Op}}
\newcommand{\Cont}{\operatorname{Cont}}
\newcommand{\Diff}{\operatorname{Diff}}
\newtheorem{proposition}{Proposition}
\newtheorem{theorem}[proposition]{Theorem}
\newtheorem{definition}[proposition]{Definition}
\newtheorem{lemma}[proposition]{Lemma}
\newtheorem{corollary}[proposition]{Corollary}
\begin{document}

\title[On the strong orderability of overtwisted 3--folds]{On the strong orderability of overtwisted 3--folds}

\subjclass[2010]{Primary: 53D10, 57R17.}
\date{June, 2014}

\keywords{positive loops of contactomorphisms, overtwisted contact structures, orderability.}

\author{Roger Casals}
\address{Instituto de Ciencias Matem\'aticas -- CSIC--UAM--UC3M--UCM.
C. Nicol\'as Cabrera, 13--15, 28049, Madrid, Spain.}
\email{casals.roger@icmat.es}

\author{Francisco Presas}
\address{Instituto de Ciencias Matem\'aticas -- CSIC--UAM--UC3M--UCM.
C. Nicol\'as Cabrera, 13--15, 28049, Madrid, Spain.}
\email{fpresas@icmat.es}



\begin{abstract}
In this article we address the existence of positive loops of contactomorphisms in overtwisted contact 3--folds. We present a construction of such positive loops in the contact fibered connected sum of certain contact $3$--folds along transverse knots. In particular, we obtain positive loops of contactomorphisms in a class of overtwisted contact structures.
\end{abstract}

\maketitle


Let $(M,\xi)$ be a connected contact manifold with a cooriented contact structure. In \cite{EP}, Y. Eliashberg and L. Polterovich observed that the universal cover $\widetilde{\Cont_0}(M,\xi)$ of the identity component of the group of contactomorphisms carries a natural non--negative normal cone. This structure induces a partial binary relation on the groups $\widetilde{\Cont_0}(M,\xi)$ and $\Cont_0(M,\xi)$. This relation is naturally reflexive and transitive but not necessarily anti--symmetric. In case it is anti--symmetric it provides a partial order on the group of contactomorphims. This has been of central interest \cite{EP,EKP,Gi2} in contact topology.\\

The existence of this partial order in $\widetilde{\Cont_0}(M,\xi)$ can be stated in terms of the non--existence of positive contractible loops of contactomorphisms, confer Section \ref{sec:pre} below. In particular, this leads to the study of positive loops of contactomorphisms and that of positive Legendrian isotopies (see for instance \cite{CFP,CN1,CN2}). A significant part of the current knowledge on the subject can be subsumed as follows. The contact jet spaces $J^1(\R^n)$ and $J^1(\R^n,\S^1)$ along with the spaces of cooriented contact elements do not admit a positive contractible loop of contactomorphisms \cite{Bh,CFP,CN2,EKP,Sa}. The standard contact structure on a sphere $\S^{2n+1}$, different from $\S^1$, does admit a positive contractible loop of contactomorphisms \cite{EKP,Gi2,Ol}.\\

The method used in \cite{CN2} also implies that the space of contact elements of $\T^2$ is strongly orderable, that is, it does not even admit a positive loop of contactomorphisms. In general, \cite[Corollary 9.1]{CN2} implies that the cosphere bundle of a manifold with infinite fundamental group does not admit a positive loop of contactomorphisms. In this direction, P. Weigel \cite{We13} shows that the existence of a non--standard symplectic ball whose Rabinowitz Floer homology growth rate is superlinear can be used to locally perturb any higher--dimensional Liouville fillable contact structure to a strongly orderable contact structure.\\

The canonical contact structures on $J^1(\R)$ and $J^1(\R,\S^1)$ and those obtained as the space of cooriented contact elements of a surface are tight contact structures. Thus the list above does not include any overtwisted contact 3--fold. This article presents the first examples of positive loops in overtwisted contact 3--folds. The first result towards the understanding of positive loops of contactomorphisms in overtwisted 3--folds appears in \cite{CPS}. There, the non--existence of positive loops generated by a Hamiltonian with a small $C^0$--norm has been proven. This statement sided with the folklore conjecture that overtwisted contact manifolds do not admit positive loops of contactomorphisms.

In the present article, we prove that there exist overtwisted contact structures admitting positive loops of contactomorphisms. This is achieved with an explicit construction involving the fibered connected sum with $(\S^1\times\S^2,\xi_{st})$ along a transverse knot. The main result we shall provide is the following

\begin{theorem}\label{thm:main}
Let $(M,\xi)$ be a contact 3--fold that admits a positive loop of contactomorphims $\{\phi_t\}$. Suppose that there exists a locally autonomous orbit $\kappa$ of $\{\phi_t\}$. Then the overtwisted contact 3--fold $(M,\xi^\kappa)$ admits a positive loop of contactomorphisms.
\end{theorem}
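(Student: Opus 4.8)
The plan is to build the positive loop on $(M,\xi^\kappa)$ by combining the given loop $\{\phi_t\}$ on the complement of the surgery region with an explicit positive loop supported on the piece glued in from $(\S^1\times\S^2,\xi_{st})$, the two being matched along the surgery boundary thanks to the local autonomy of $\kappa$. First I would record that $\kappa$ is a positively transverse knot: writing $H_t$ for the contact Hamiltonian generating $\{\phi_t\}$ with respect to a contact form $\alpha$, positivity gives $\alpha(\dot\kappa)=H_t(\kappa(t))>0$, so $\kappa$ is transverse to $\xi$ and closed since $\phi_1=\Id$. The hypothesis that $\kappa$ is locally autonomous means that on a neighborhood $U$ of $\kappa$ the field generating $\{\phi_t\}$ equals a fixed positive contact field $X$, whose flow therefore has $\kappa$ as a genuine periodic orbit of period $1$. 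I would fix a standard transverse neighborhood $\nu(\kappa)\cong\S^1\times\D^2$ inside $U$ in which $X$ is explicit, so that on the boundary torus $T=\bd\nu(\kappa)$ the loop acts by the autonomous time--$t$ flow of $X|_T$.

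Next, recall that $(M,\xi^\kappa)$ is the fibered connected sum of $(M,\xi)$ with $(\S^1\times\S^2,\xi_{st})$ along $\kappa$: one excises $\nu(\kappa)$ and glues in the complement $N$ of a standard transverse knot in $(\S^1\times\S^2,\xi_{st})$ along $T$ via a contactomorphism. Using the contact neighborhood theorem together with Gray stability I would normalise the contact form on $N$ near $\bd N$ and choose the gluing so that the collar of $\bd N$ is identified with the collar of $T$ on which $\{\phi_t\}$ is the autonomous $X$--flow. On $M\setminus\nu(\kappa)$ I keep the loop $\{\phi_t\}$, which near $T$ is precisely this autonomous flow. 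It then remains to produce a positive loop $\{\psi_t\}$ on the glued piece $N$ which, on the collar of $\bd N$, agrees with the same autonomous $X$--flow and which closes up, $\psi_1=\Id$. Granting such a $\{\psi_t\}$, the two families glue to a single family $\{\Phi_t\}$ of contactomorphisms of $(M,\xi^\kappa)$; it is positive because both $\{\phi_t\}$ and $\{\psi_t\}$ are, and it is a loop because $\phi_1=\Id$ and $\psi_1=\Id$, yielding the desired positive loop.

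The main obstacle is exactly the construction of $\{\psi_t\}$ on $N$ with the prescribed autonomous boundary behaviour and the closing condition $\psi_1=\Id$. A naive autonomous interpolation between the boundary field $X$ and the Reeb field of $(\S^1\times\S^2,\xi_{st})$ will in general have no periodic orbits, so its time--$1$ map will not be the identity; the resolution is to use the genuine positive loop carried by $(\S^1\times\S^2,\xi_{st})$, whose $\S^1$--symmetry makes it compatible with the rotation--type, autonomous boundary condition coming from the locally autonomous orbit $\kappa$. Concretely, I would exploit that this positive loop can be truncated and reparametrised near the transverse knot so that it equals the $X$--flow on the collar while remaining positive and satisfying $\psi_1=\Id$. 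This is the step where local autonomy of $\kappa$ is indispensable, since only an autonomous, hence $\S^1$--invariant, boundary condition can be matched to the symmetric positive loop on the inserted piece. Overtwistedness of $(M,\xi^\kappa)$ is built into the connected--sum construction, so no further work is needed on that point.
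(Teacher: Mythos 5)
Your overall skeleton---excise a neighborhood of $\kappa$, keep $\{\phi_t\}$ on the complement, and glue in a positive loop on the $(\S^1\times\S^2,\xi_{st})$ piece that matches an autonomous rotation on the collar---is exactly the paper's strategy (the normal form of Proposition \ref{lem:key} plus the $\#$--construction of Section \ref{sec:pos}). But there is a genuine gap at precisely the step you yourself flag as the main obstacle: you invoke ``the genuine positive loop carried by $(\S^1\times\S^2,\xi_{st})$'' as though such a loop were already available, when constructing it is the core technical content of the paper. The obvious circle actions are not positive: the rotation $\rho_t(\theta;r,\varphi,z)=(\theta;r,\varphi+t,z)$ has Hamiltonian $r^2$, which vanishes along the two core knots $\{r=0\}$---including the very knot along which you must glue---and the loop $\zeta_t$ has Hamiltonian $z$, which is negative on half the manifold. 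The paper manufactures the required loop from these two: first (Proposition \ref{prop:s1s2}) the Eliashberg--Polterovich conjugation trick $\beta_t=\rho_t\circ\psi\circ\rho_t\circ\psi^{-1}$, with $\psi$ a compactly supported contactomorphism displacing the lower core knot, produces a loop positive on $\T_2$ and equal to $\rho_t\circ\rho_t$ on $\T_1$; then (Proposition \ref{prop:autos1s2}) $\delta_t=\zeta_t\circ(\beta_t\odot\cdots\odot\beta_t)$, with $k$ concatenated copies for $k$ large, is positive everywhere, and on $\T_1$ its Hamiltonian $z+2kr^2$ is autonomous with flow preserving $\T_1$, hence locally autonomous at the gluing knot $\{z=1\}$. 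Nothing in your proposal supplies this loop or a substitute, and its existence is not a known fact one may quote: that tight $\S^1\times\S^2$ admits a positive loop at all is itself one of the paper's byproducts (cf.\ Corollary \ref{cor:main3}).

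A second, related, problem is your proposed mechanism for matching: ``truncating and reparametrising'' a positive loop near the transverse knot is not a legitimate operation on loops of contactomorphisms. Cutting off the contact Hamiltonian near the knot destroys, in general, the loop condition $\psi_1=\Id$ (the time--one map of the truncated Hamiltonian need not be the identity), and reparametrising in time cannot create positivity where the Hamiltonian vanishes, so it cannot repair $\rho_t$ along $\{r=0\}$. The paper avoids any cutting: $\delta_t$ is globally defined and already autonomous near the knot, and Proposition \ref{lem:key} (rescaling the contact form by $F_0$ so that the Hamiltonian is $\equiv 1$ near the orbit, whence the flow is the strict rotation $\theta\mapsto\theta+t$ in the model $(\S^1\times D^2(\rho),\ker\{d\theta+r^2d\varphi\})$) guarantees that both Hamiltonians are constant and equal on the gluing region $A_R$, so the two loops agree there without modification. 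In short, the loop on the inserted piece must be built to be positive \emph{and} locally autonomous at the knot from the start; this construction, which your proposal leaves as an unproven appeal, is where all the work of Section \ref{sec:sum} lies.
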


In conjuction with \cite[Theorem 1]{CPS}, the Hamiltonians generating such positive loops cannot be $C^0$--small. The notion of a locally autonomous orbit is introduced in Section \ref{sec:pos}. For instance, the Boothby--Wang manifold associated to a surface conforms the hypothesis of Theorem \ref{thm:main}.

\begin{corollary}\label{cor:main1}
Let $(\Sigma,\omega)$ be a symplectic 2--dimensional orbifold. The contact structure obtained by a half Lutz twist along a positive transverse regular fibre of the circle orbibundle $\S(\Sigma,\omega)$ admits a positive loop of contactomorphisms.
\end{corollary}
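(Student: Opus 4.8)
The plan is to exhibit the circle orbibundle $\S(\Sigma,\omega)$ as a Boothby--Wang prequantization of $(\Sigma,\omega)$ and to verify that, together with the loop generated by its Reeb flow, it satisfies the hypotheses of Theorem \ref{thm:main}; the half Lutz twist along a regular fibre then coincides with the operation $\xi\mapsto\xi^\kappa$, and the conclusion follows at once. Concretely, I would first place on $M:=\S(\Sigma,\omega)$ the connection contact form $\alpha$ whose curvature equals $\omega$, so that $\xi=\ker\alpha$ and the Reeb field $R_\alpha$ generates the fibrewise circle action. Since this action is periodic, the Reeb flow $\{\phi_t\}$ closes up to a loop of contactomorphisms, and because its contact Hamiltonian with respect to $\alpha$ is the constant $\alpha(R_\alpha)\equiv 1>0$, the loop is positive.

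Next I would identify the candidate orbit. A regular fibre $\kappa$ of the orbibundle is a free orbit of the circle action, hence a closed Reeb orbit; being everywhere tangent to $R_\alpha$, it is positively transverse to $\xi$, exactly as required for a half Lutz twist along a positive transverse knot. It then remains to check that $\kappa$ is a locally autonomous orbit of $\{\phi_t\}$ in the sense of Section \ref{sec:pos}. This is where the definition carries the argument: the loop $\{\phi_t\}$ is globally generated by the time--independent field $R_\alpha$, so on a saturated tubular neighbourhood of $\kappa$ the flow is autonomous and $\kappa$ is a locally autonomous orbit automatically. With all hypotheses in place, Theorem \ref{thm:main} produces a positive loop of contactomorphisms on the overtwisted manifold $(M,\xi^\kappa)$.

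Finally I would match $\xi^\kappa$ with the half Lutz twist named in the statement. The operation $\xi\mapsto\xi^\kappa$ of Theorem \ref{thm:main} is carried out by a fibered connected sum with $(\S^1\times\S^2,\xi_{st})$ along the transverse knot $\kappa$, and this surgery along a positive transverse knot realizes precisely the half Lutz twist along $\kappa$; in particular $\xi^\kappa$ is overtwisted and agrees with the contact structure in the statement. I expect the only delicate points to be bookkeeping ones: handling the Seifert (orbifold) structure so that $\kappa$ is chosen to be a genuine regular fibre disjoint from the exceptional ones and that the circle action is honestly periodic on the smooth total space, and confirming against the definition of Section \ref{sec:pos} that global autonomy of the Reeb flow indeed entails local autonomy of $\kappa$. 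Neither point introduces an essential difficulty, so the corollary reduces to a direct application of Theorem \ref{thm:main}.
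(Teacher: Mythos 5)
Your proposal is correct and follows exactly the route the paper intends: it only remarks that ``the Boothby--Wang manifold associated to a surface conforms the hypothesis of Theorem \ref{thm:main}'', and your argument fills in precisely that reasoning --- the Reeb flow of the connection form is an autonomous positive loop, so Definition \ref{def:loc} holds trivially away from the embedding condition, which your restriction to a regular (free) fibre handles, since at an exceptional fibre $\phi_t(p)$ would multiply cover the fibre and fail to be an embedding. The identification of $\xi^\kappa$ with the half Lutz twist is definitional in the paper (Subsection \ref{ssec:lutz}), so nothing further is needed.
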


This yields a positive loop of contactomorphisms for the overtwisted contact structures $(\S^3,\xi_k)$ corresponding to positive integers $k\in\Z^+$ representing the homotopy classes $k\in H^3(M,\pi_3(\S^2))\cong\Z$.


Theorem \ref{thm:main} also applies to the (unique) tight contact structure $\xi_{st}$ on $\S^1\times\S^2$.

\begin{corollary}\label{cor:main3} The contact structure $(\S^1\times\S^2,\xi^\kappa)$ obtained by a half Lutz twist along the positive transverse knot $\kappa=\S^1\times\{(0,0,1)\}\subset(\S^1\times\S^2,\xi_{st})$ admits a positive loop of contactomorphisms.
\end{corollary}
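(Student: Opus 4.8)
The plan is to check that $(\S^1\x\S^2,\xi_{st})$ together with the knot $\kappa$ satisfies the hypotheses of Theorem \ref{thm:main}, and then to invoke that theorem. Concretely, I must produce two ingredients: (a) a positive loop $\{\phi_t\}$ of contactomorphisms of the tight $(\S^1\x\S^2,\xi_{st})$, and (b) a verification that $\kappa=\S^1\x\{(0,0,1)\}$, as a positive transverse knot, is a locally autonomous orbit of $\{\phi_t\}$ in the sense of Section \ref{sec:pos}. Granting (a) and (b), Theorem \ref{thm:main} applies verbatim with $(M,\xi)=(\S^1\x\S^2,\xi_{st})$ and outputs a positive loop on the half Lutz twist $(\S^1\x\S^2,\xi^\kappa)$, which is precisely the statement; no further work is needed after the two ingredients are in place.

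To build the loop I would fix the standard $\S^1\x\S^1$--invariant model for the tight structure: write $\S^2$ with height $\rho\in[-1,1]$ and longitude $\varphi\in\R/2\pi\Z$, poles at $\rho=\pm 1$, let $t\in\R/2\pi\Z$ be the $\S^1$--coordinate, and take $\alpha=f(\rho)\,dt+g(\rho)\,d\varphi$ with $g(\pm1)=0$, with $fg'-f'g>0$, and with the curve $(f,g)$ performing a half--turn, so that $f$ changes sign. Normalizing so that $(0,0,1)$ is the pole $\rho=1$ and $\kappa$ is positively transverse, I would start from the $\varphi$--rotation $\mathrm{Rot}_t$, $t\in[0,1]$, which advances $\varphi$ by $2\pi t$: this is a loop whose contact Hamiltonian is $g(\rho)\geq 0$, strictly positive away from the binding circles $\{\rho=\pm1\}$ and vanishing on them. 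Near each pole the invariant field $\epsilon\,\partial_t+\partial_\varphi$, with $\epsilon=\pm1$ chosen so that its Hamiltonian $\epsilon f+g$ is positive there, is autonomous, has period $2\pi$, and is tangent to the core circle. I would then compose $\mathrm{Rot}_t$ with periodic loops $\Psi^{\pm}_t$ supported in disjoint solid--torus neighborhoods of the two poles, equal near the cores to reparametrized flows of these autonomous fields, so as to upgrade the total Hamiltonian to be strictly positive on the binding as well. By construction $\{\phi_t\}$ coincides with the autonomous flow of $\epsilon\,\partial_t+\partial_\varphi$ in a neighborhood of $\kappa$, which is tangent to $\kappa$; this exhibits $\kappa$ as a locally autonomous orbit, giving (b).

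The hard part will be (a): arranging that the total contact Hamiltonian stays strictly positive throughout the transition annuli while the patched family remains a genuine loop. One cannot simply take $\{\phi_t\}$ to be a globally autonomous flow, since tight $\S^1\x\S^2$ carries the trivial circle fibration of Euler number zero and therefore admits no periodic Reeb flow; indeed no invariant positive contact field is periodic, as the winding numbers of its orbits on the level tori would be forced to be constant, contradicting the half--turn of $(f,g)$. Positivity must therefore be assembled from the non--negative $\varphi$--rotation plus the strictly positive, periodic, binding--supported corrections $\Psi^{\pm}_t$, and the delicate point is to control the cut--offs interpolating between the autonomous binding model and the global rotation so that the sum of the Hamiltonians, twisted by the relevant compositions, never vanishes. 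This is exactly where the local nature of the hypothesis is essential: I only need $\{\phi_t\}$ to be autonomous near $\kappa$, not globally, so the interpolation is unobstructed away from $\kappa$. Once $\{\phi_t\}$ is produced and $\kappa$ is seen to be a locally autonomous positive transverse orbit, Theorem \ref{thm:main} yields the desired positive loop on $(\S^1\x\S^2,\xi^\kappa)$.
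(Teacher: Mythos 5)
Your reduction to Theorem \ref{thm:main} is exactly the paper's route, and ingredient (b) is unproblematic; but the heart of the corollary is ingredient (a), and your proposal does not actually establish it. The corrections $\Psi^{\pm}_t$ --- loops supported in solid--torus neighborhoods of the poles, equal near the cores to reparametrized autonomous flows --- are precisely the nontrivial objects, and the cut--off strategy you gesture at fails for a concrete reason: truncating an autonomous contact Hamiltonian destroys periodicity of its flow. In the model $(\S^1\times D^2,\alpha_0=d\theta+r^2d\varphi)$, a radial Hamiltonian $H=h(r)$ generates the field $(h-\tfrac{rh'}{2})\partial_\theta+\tfrac{h'}{2r}\partial_\varphi$, whose $\theta$-- and $\varphi$--rotation rates vary with $r$ as soon as $h$ is non--constant; hence no positive time of the flow is the identity on the transition annulus, and the truncated flow is not a loop. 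Your remark that ``the interpolation is unobstructed away from $\kappa$'' is an assertion, not an argument --- the obstruction is exactly loop--ness, not positivity in the annuli --- and you provide no mechanism to produce a supported loop whose Hamiltonian is positive on the core; it is not even clear such an object exists, and constructing it would be at least as hard as the original problem.

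The paper's construction avoids cutting off the dynamics altogether, via two tricks absent from your proposal. First (Proposition \ref{prop:s1s2}, following Eliashberg--Polterovich and Giroux), it conjugates the global non--negative rotation loop $\rho_t$ (Hamiltonian $r^2$, vanishing exactly on the two pole circles) by a contactomorphism $\psi$ supported near the south--pole circle $\gamma$ and displacing it, and sets $\beta_t=\rho_t\circ\psi\circ\rho_t\circ\psi^{-1}$: a conjugate of a loop is automatically a loop, and the Hamiltonian of $\beta_t$ is a sum of two non--negative terms whose zero loci, $\gamma$ and its displaced copy, are disjoint, hence it is positive on $\T_2$ --- loop--ness and positivity at the degenerate locus come for free, with no interpolation to control. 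Second (Proposition \ref{prop:autos1s2}), positivity at the north pole, where the Hamiltonian $2r^2$ of $\beta_t$ still vanishes, is bought by composing with the strict loop $\zeta_t$ (Hamiltonian $z$, equal to $1$ on $\kappa$ but as negative as $-1$ on $\T_2$) and concatenating $k$ copies of $\beta_t$, so that $z+kH(\beta_{kt})(\zeta_t^{-1}(\cdot),t)>0$ everywhere for $k$ large; on $\T_1$ the resulting Hamiltonian is the autonomous $z+2kr^2$ with flow preserving $\T_1$, which is what yields local autonomy along $\kappa$. Displacement--conjugation and concatenation--domination are the two missing ideas; without them your sketch of step (a) has a genuine gap.
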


The existence of such positive loops implies squeezing phenomena on the aforementioned contact 3--folds. Nevertheless we cannot conclude its contractibility and thus the squeezing in the isotopy sense does not follow. Similarly, Theorem \ref{thm:main} implies that the binary relation \cite{EP} is not a partial order in $\Cont_0(M,\xi)$ for these overtwisted manifolds, but the lift to the universal cover might still be a partial order. See Subsection \ref{ssec:ord} for details.\\

The article is organized as follows. Section \ref{sec:pre} contains the required preliminaries in contact topology. The construction used in order to prove Theorem \ref{thm:main} involves a fibered connected sum with $\S^1\times\S^2$. Section \ref{sec:pos} presents this contact manifold and describes a certain non--negative loop of contactomorphisms. In Section \ref{sec:sum} we prove Theorem \ref{thm:main} using the tools in Section \ref{sec:pre} and the loop in Section \ref{sec:pos}.\\

{\bf Acknowledgments.} We are grateful to Y. Eliashberg, V. Ginzburg, E. Murphy, L. Polterovich, A. Rechtman and S. Sandon for useful discussions. The present work is part of the authors activities within CAST, a Research Network Program of the European Science Foundation. The authors are supported by the Spanish National Research Project MTM2010--17389.

\section{Preliminaries}\label{sec:pre}

In this section we briefly introduce the basic ingredients involved in Theorem \ref{thm:main}. The material can be essentially extracted from \cite{Ge08} for Subsections \ref{ssec:trans}, \ref{ssec:lutz} and \ref{ssec:loop}. The reader is referred to \cite{EKP,Gi2} for Subsection \ref{ssec:ord}. In this article $(M,\xi)$ denotes a contact 3--fold.

\subsection{Fibered connected sum} \label{ssec:trans}
Let us consider the 3--fold
$$\S^1 \times D^2(R)=\{(\theta;x,y):x^2+y^2\leq R\}=\{(\theta;r,\varphi):r\leq R\}$$
with the contact structure $\xi_0$ defined by the contact form $\alpha_0=d\theta + r^2 d\varphi$.\\

Suppose that $\gamma:\S^1\longrightarrow(M, \xi)$ is a transverse knot with a fixed frame $\tau: \S^1\longrightarrow \gamma^* \xi$. Then, for $R>0$ small enough, there exists a unique (up to contact isotopy) contact embedding
$$\phi: (\S^1 \times D^2(R), \xi_0) \longrightarrow (M, \xi)$$  such that $\phi(\theta,0, 0)= \gamma(\theta)$ and the frame $\phi^*\tau:\S^1\longrightarrow\xi_0$ is homotopic to $\partial_x$.\\

Given two framed transverse knots $(\gamma_1,\tau_1)$ and $(\gamma_2,\tau_2)$ in two contact 3--folds $(M_1,\xi_1)$ and $(M_2,\xi_2)$, we can define the fibered connected sum along these knots. It is described as follows.

Consider the domain $A_{R}=\S^1 \times (-R^2, R^2) \times \S^1$ with coordinates $(\theta, v,\varphi)$ and the contact form $\eta= d\theta +v d\varphi$. Then the pair of the gluing maps:
\begin{eqnarray*}
g_1: \S^1 \times (D^2(R) \setminus \{0\}) & \longrightarrow  & \S^1 \times (0, R^2) \times \S^1 \subset A_{R}\\
(\theta, r, \varphi) & \longmapsto & (\theta, r^2, \varphi)
\end{eqnarray*}
\begin{eqnarray*}
g_2: \S^1 \times (D^2(R) \setminus \{0\}) & \longrightarrow  & \S^1 \times (-R^2,0) \times \S^1 \subset A_{R}\\
(\theta, r, \varphi) & \longmapsto & (\theta, -r^2, -\varphi)
\end{eqnarray*}
satisfy $g_1^*\eta=g_2^*\eta = \alpha_0$ and thus are strict contact embeddings. Then the contact fibered connected sum along $(\gamma_1,\tau_1)$ and $(\gamma_2,\tau_2)$ is the smooth manifold
$$(M_1,\xi_1) \# (M_2,\xi_2):=(M_1\setminus\gamma_1(\S^1))\cup_{g_1\circ \phi_1} A_{R} \cup_{g_2 \circ \phi_2}  (M_2\setminus\gamma_2(\S^1))$$
where $\phi_1$ and $\phi_2$ are the contact embeddings corresponding to $(\gamma_1,\tau_1)$ and $(\gamma_2,\tau_2)$. This 3--fold is endowed with a contact structure in each piece and, since these are glued with $g_1$ and $g_2$, there exists a contact structure on $(M_1,\xi_1) \# (M_2,\xi_2)$.

Observe that an isotopy of framed transverse knots preserves the isotopy class of the resulting contact structure (by Gray's stability). Also, the isotopy class of the contact structure does not depend on each of the frames $(\tau_1,\tau_2)$ but only on their sum:

\begin{lemma}\label{lem:isot}
The contact structure on the fibered connected sum $(M_1,\xi_1)\#(M_2,\xi_2)$ along $(\gamma_1,\tau_1)$ and $(\gamma_2,\tau_2)$ is isotopic to the contact structure on the fibered connected sum $(M_1,\xi_1)\#(M_2,\xi_2)$ along $(\gamma_1,\tau_1+k)$ and $(\gamma_2,\tau_2-k)$ for some $k\in\Z$.
\end{lemma}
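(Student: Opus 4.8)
The plan is to produce an explicit contactomorphism between the two fibered sums, supported in the gluing tube $A_R$, so that the two contact structures literally agree under the resulting identification. By the uniqueness (up to contact isotopy) of the standard neighborhood of a framed transverse knot recalled above, the construction depends on the framings $\tau_1,\tau_2$ only through the contact embeddings $\phi_1,\phi_2$, and replacing $\tau_i$ by $\tau_i\pm k$ amounts to precomposing $\phi_i$ with a contactomorphism of the model $(\S^1\x D^2(R),\xi_0)$ that rotates the normal framing $k$ times. The key observation is that such a twist is realized by an honest contactomorphism of $\alpha_0=d\theta+r^2\,d\varphi$: for $R^2<1/|k|$ the map
\[
\hat W_k(\theta,r,\varphi)=\left(\theta,\ \tfrac{r}{\sqrt{1-kr^2}},\ \varphi+k\theta\right)
\]
fixes the core, has normal winding $k$, and satisfies $\hat W_k^*\alpha_0=(1-kr^2)^{-1}\alpha_0$. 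The bare linear twist $\varphi\mapsto\varphi+k\theta$ is not contact for $\alpha_0$; the radial rescaling $r\mapsto r/\sqrt{1-kr^2}$ is precisely the conformal correction that repairs this, and getting this correction right is the first technical point.

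I would then transport $\hat W_k$ through the gluing maps. Conjugating $\hat W_k$ by $g_1$ yields a contactomorphism of $(A_R,\eta)$ on the region $v>0$, while conjugating $\hat W_{-k}$ by $g_2$ yields one on $v<0$; a direct computation shows that both are the single map
\[
V(\theta,v,\varphi)=\left(\theta,\ \tfrac{v}{1-kv},\ \varphi+k\theta\right),\qquad V^*\eta=\tfrac{1}{1-kv}\,\eta,
\]
defined on all of $A_R$ as soon as $R^2<1/|k|$. The orientation reversal $\varphi\mapsto-\varphi$, $v\mapsto-v$ built into $g_2$ is exactly what makes the bookkeeping work: the opposite shifts $+k$ along $\gamma_1$ and $-k$ along $\gamma_2$ are converted into the \emph{same} twist of the tube. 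This is the conceptual heart of the statement, and it explains why only the sum $\tau_1+\tau_2$ is an invariant: an unbalanced change would produce two different maps at the two ends of $A_R$, which could not be reconciled by a single self-contactomorphism.

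Finally I would assemble the global map. Passing from framings $(\tau_1,\tau_2)$ to $(\tau_1+k,\tau_2-k)$ replaces each gluing map $g_i\circ\phi_i^{-1}$ by $V^{-1}\circ(g_i\circ\phi_i^{-1})$, so the diffeomorphism which is the identity on $M_1\setminus\gamma_1(\S^1)$ and $M_2\setminus\gamma_2(\S^1)$ and equals $V$ on $A_R$ is well defined; since $V^*\eta$ is a positive multiple of $\eta$, it carries $\ker\eta$ to itself and is therefore a contactomorphism of the two fibered sums (after the harmless reparametrization of the interval $(-R^2,R^2)$ induced by $v\mapsto v/(1-kv)$). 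Under this identification the two contact structures coincide, so they are in particular isotopic. I expect the only real obstacle to be the second step, namely exhibiting $\hat W_k$ and checking that its two conjugates collapse to the same contactomorphism $V$ of the tube; once that identity is verified the remainder is formal and no appeal to Gray stability is needed.
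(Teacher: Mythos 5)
Your proposal is correct, and it is worth noting that the paper itself offers \emph{no} proof of this lemma --- it is asserted as an observation, with the analogous independence-of-frame statement later deferred to the references \cite{E1,Ge}. Your argument therefore supplies a complete, self-contained computation where the paper has none, and the computations check out: $\hat W_k^*\alpha_0=(1-kr^2)^{-1}\alpha_0$ is correct (valid once $R^2<1/|k|$), and both conjugates $g_1\circ\hat W_k\circ g_1^{-1}$ on $v>0$ and $g_2\circ\hat W_{-k}\circ g_2^{-1}$ on $v<0$ do collapse to the single map $V(\theta,v,\varphi)=(\theta,v/(1-kv),\varphi+k\theta)$ with $V^*\eta=(1-kv)^{-1}\eta$ --- the orientation reversal in $g_2$ converting the opposite twists into the same tube map is exactly the mechanism behind the paper's remark that only the sum of the frames matters. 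Two mild caveats. First, your claim that no appeal to Gray stability is needed is true of the gluing step but overstated for the normalization step: to replace the standard embedding for the framing $\tau_i\pm k$ by precisely $\phi_i\circ\hat W_{\pm k}$ (up to shrinking the tube), you invoke the uniqueness up to contact isotopy of the standard neighborhood of a framed transverse knot, which is itself a Gray--Moser-type statement combined with contact isotopy extension; since this uniqueness is exactly what the paper recalls in Subsection \ref{ssec:trans}, it is fair to cite, but it is where the isotopy-theoretic input hides. Second, what your map $F$ (identity off the tube, $V^{\pm1}$ on it, with the asymmetric reparametrization of the $v$-interval you flagged) literally produces is a contactomorphism between the two fibered sums supported near the surgery region; reading the lemma's ``isotopic'' requires fixing an identification of the two glued manifolds, and taking $F$ itself as that identification --- legitimate, since the fibered sum is only defined up to such choices --- makes the structures coincide. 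With these points made explicit, your proof is complete and arguably more informative than the citation the paper leans on, since it exhibits the conformal correction $r\mapsto r/\sqrt{1-kr^2}$ that an unfamiliar reader would otherwise have to rediscover.
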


The fibered connected sum along a framed transverse knot can be used to modify the contact structure of a 3--fold (while preserving its diffeomorphism type). Indeed, the connected sum $M\#(\S^1\times\S^2)$ along the knot $\S^1\times\{pt.\}$ is diffeomorphic to $M$. This operation yields a non--trivial operation from the contact topology viewpoint, a half Lutz twist.

\subsection{The half Lutz twist}\label{ssec:lutz}
Consider $\S^1\times\R^3$ with coordinates $(\theta;x,y,z) \simeq (\theta;r,\varphi,z)$ and the contact manifold
$$(\S^1\times\S^2,\xi_{st})=(\{(\theta;r,\varphi,z):r^2+z^2=1\},\ker\{zd\theta+r^2d\varphi\})\subset\S^1\times\R^3.$$
This is the unique tight contact structure on $\S^1\times\S^2$, see \cite{Gi1}. 

Let $(\Gamma,\iota)$ be the framed transverse knot on $\S^1 \times \S^2$ defined by $\Gamma(\theta)=(\theta;0,0,1)$ and $\iota(\theta)= \partial_x$.

\begin{definition}
Let $(M, \xi)$ be a contact $3$--fold and $(\gamma,\tau)$ a framed transverse knot. The half Lutz twist of $(M,\xi)$ along the transverse knot $\gamma$ is the contact fibered connected sum $(M,\xi)\# (\S^1\times\S^2,\xi_{st})$ along $(\gamma,\tau)$ and $(\Gamma,\iota)$.
\end{definition}

The half Lutz twist of $(M,\xi)$ along a transverse knot $\gamma$ is denoted by $(M,\xi^\gamma)$. Note that the action of $\Omega SO(3)\subset\Diff(\S^1\times\S^2)$ implies that the diffeomorphism type of $(M,\xi)\# (\S^1\times\S^2,\xi_{st})$ is independent of the choice of frame $\iota$ and thus equal to $M$. In terms of surgeries, it is a Dehn surgery in which the meridian is sent to the meridian and thus the smooth type of the resulting manifold remains the same. Similarly, the contact structure $\xi^\gamma$ does not depend either on the choice of frame, see Lemma \ref{lem:isot} or \cite{E1,Ge}.\\

There are two relevant features regarding $(M,\xi^\gamma)$. First, it is an overtwisted contact 3--fold. There is a family of overtwisted disks that appear from the family of immersed overtwisted disks $\{Ê\theta \} \times \S^2$ in the tight $(\S^1\times\S^2,\xi_{st})$, whose boundaries (collapsed at a point) form the knot $\Gamma$. Second, the homotopy class of $\xi^\gamma$ differs from that of $\xi$. The primary obstruction is the class $d^2(\xi,\xi^\gamma)=c_1(\xi)-c_1(\xi^\gamma)=-2PD([\gamma])\in H^2(M,\pi_2(\S^2))$.\\

The positive loop of contactomorphims obtained in Theorem \ref{thm:main} is essentially built separately in the two pieces of a fibered connected sum. One corresponds to the given contact 3--fold $(M,\xi)$ and the other belongs to $(\S^1\times\S^2,\xi_{st})$. The loop is constructed by gluing a positive loop in each of the pieces, thus resulting in a positive loop of contactomorphims for the half Lutz twist of $(M,\xi)$.

\subsection{Loops of contactomorphisms}\label{ssec:loop}

Let $(M, \xi)$ be a contact structure and a $1$--form $\alpha$ such that $\xi= \ker \alpha$. The choice of $\alpha$ uniquely determines a vector field $R_\alpha$ such that
$$i_{R_\alpha}\alpha=1,\quad i_{R_\alpha}d\alpha=0.$$
A vector field $X$ is said to be contact if $\SL_X \alpha =f\alpha$, for some $f\in C^\infty(M)$. Given a contact vector field $X$, the function $H=\alpha(X)\in C^\infty(M)$ satisfies the equations
\begin{eqnarray*}
i_X \alpha & = & H, \\
i_X d\alpha &= &  (d_{R_\alpha}H) \alpha - dH.
\end{eqnarray*}
Conversely, given a function $H\in C^\infty(M)$ there exists a unique contact vector field $X$ verifying the equations above. The function $H$ is called the Hamiltonian function associated to $X$. This establishes a linear isomorphism (depending on $\alpha$) between the vector space of contact vector fields and the vector space of smooth functions.\\

The correspondance can be made time--dependent. Given a time--dependent flow $\phi_t:M\times[0,1]\longrightarrow M$ of contactomorphisms, its associated time--dependent vector field is defined by
$$\dot\phi_t = X_t \circ \phi_t.$$
The function $H_t=\alpha(X_t): M \times [0,1]\longrightarrow\R$ will be referred to as the Hamiltonian generating the contact flow $\phi_t$, and denoted by $H(\phi_t)$. It will we assumed to be 1--periodic in time. The flow of contactomorphisms $\phi_t$ is said to be a smooth loop if $\phi_1=id$ and the quotient map $\phi_t:M \times \S^1 \longrightarrow M$ is smooth. The loop of contactomorphisms is positive if its generating Hamiltonian is positive, i.e. $H_t(p,t)>0$ at any $(p,t)\in M\times\S^1$. \\

There are two useful operations in the spaces of loops of contactomorphisms: concatenation and composition. The concatenation is defined as follows. Let $\{\Phi^1_t,\ldots,\Phi^l_s\}$ be a set of $l\in\Z^+$ loops of contactomorphisms respectively generated by Hamiltonians $\{F^1_t,\ldots,F^l_t\}$.\\

The concatenation of the loops $\{\Phi^1_t,\ldots,\Phi^l_t\}$ is defined as
$$
\Phi^1_t\odot\ldots\odot\Phi^l_t = \left\{ \begin{array}{llllll}
\Phi^1_{lt} & t\in[0,1/l], \\
\Phi^2_{lt-1} & t\in[1/l, 2/l], \\
 & \vdots \\
\Phi^{l-1}_{lt-l+2} & t\in[1-2/l,1-1/l],\\
\Phi^l_{lt-l+1} & t\in[1-1/l,1].
\end{array}\right.
$$
The generating Hamiltonian $C:M\times\S^1\longrightarrow\R$ for the concatenation is
\begin{equation*}
C_t=H(\Phi^1_t\odot\ldots\odot\Phi^l_t) = \left\{
\begin{array}{llllll}
F^1(\cdot,{lt}) & t\in[0,1/l], \\
F^2(\cdot,lt-1) & t\in[1/l, 2/l], \\
 & \vdots \\
F^{l-1}(\cdot,lt-l+2) & t\in[1-2/l,1-1/l],\\
F^l(\cdot,lt-l+1) & t\in[1-1/l,1].
\end{array}\right.
\label{eq:conca}
\end{equation*}

Let $\Phi_t$ and $\Psi_t$ be two loops of contactomorphisms generated by $F_t$ and $G_t$. The second operation is the composition $\{\Phi_t\circ\Psi_t\}_t$ of $\Phi_t$ and $\Psi_t$. Suppose that the first loop satisfies $\Phi_t^* \alpha = e^{f_t} \alpha$, then the Hamiltonian generating the composition is
\begin{equation*}
H(\Phi_t \circ \Psi_t)(p,t)=F_t(p,t) + e^{-f_t} G_t(\Phi_t^{-1}(p), t). \label{eq:compo}
\end{equation*}
In addition, the conjugation $\{\psi \circ \Phi_t \circ  \psi^{-1}\}_t$ of the loop $\Phi_t$ by a contactomorphism $\psi\in\Cont(M,\xi)$, such that $\psi^* \alpha = e^f \alpha$, is a loop of contactomorphisms generated by the Hamiltonian
\begin{equation*}
H(\psi \circ \Phi_t \circ  \psi^{-1})(p,t) =e^{-f}F_t(\psi^{-1}(p),t). \label{eq:conj}
\end{equation*}

\subsection{Orderability}\label{ssec:ord}
Let us consider the identity component of the group of contactomorphisms $G=\Cont_0(M,\xi)$ and its universal cover $\widetilde{G}=\widetilde{\Cont}_0(M,\xi)$. These groups are endowed with a natural relation. Given $f,g\in\Cont_0(M,\xi)$, the relation is defined as $f\geq g$ if and only if there exists a path $\phi_t$ of contactomorphisms such that $\phi_1=f\circ g^{-1}$ and its generating Hamiltonian is non--negative. This relation is reflexive and transitive. Similarly, given two elements $[\phi_t],[\psi_t]\in\widetilde{G}$. The relation $[\phi_t]\geq[\psi_t]$ if and only if $[\phi_t\circ\psi^{-1}_t]$ admits a representative generated by a non--negative Hamiltonian is reflexive and transitive.\\

The contact manifold $(M,\xi)$ is said to be strongly orderable if the relation $(G,\geq)$ is antisymmetric (and thus defines a genuine partial order). It is said to be orderable if the relation $(\widetilde{G},\geq)$ is also antisymmetric. The following criterion relates the existence of this genuine partial order with the existence of positive loops of contactomorphisms:

\begin{proposition}\cite[Criterion 1.2.C]{EP} The relation $\geq$ is a non--trivial partial order on $G$ if and only if there are no loops of contactomorphisms of $(M,\xi)$ generated by a strictly positive Hamiltonian.\\

In addition, the relation $\geq$ is a non--trivial partial order on $\widetilde{G}$ if and only if there are no contractible loops of contactomorphisms of $(M,\xi)$ generated by a strictly positive Hamiltonian.
\end{proposition}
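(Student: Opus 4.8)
The plan is to translate antisymmetry of $\geq$ into a statement about a positivity cone and then exploit the group structure. Set $\mathcal{P}=\{g\in G: g\geq\mathrm{id}\}$, the set of time--$1$ endpoints of paths $\phi_t$ with $\phi_0=\mathrm{id}$ and non--negative generating Hamiltonian. Since the relation is right--invariant ($f\geq g$ iff $fg^{-1}\in\mathcal{P}$), antisymmetry of $(G,\geq)$ is equivalent to $\mathcal{P}\cap\mathcal{P}^{-1}=\{\mathrm{id}\}$. First I would record two closure properties of $\mathcal{P}$. It is a sub--semigroup, because concatenating a non--negative path to $h_1$ with the left translate $h_1\phi^2_t$ of a non--negative path to $h_2$ is a non--negative path to $h_1h_2$. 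It is invariant under conjugation, because the conjugation formula $H(\psi\circ\Phi_t\circ\psi^{-1})=e^{-f}F_t(\psi^{-1}(\cdot),t)$ multiplies a non--negative Hamiltonian by the positive factor $e^{-f}$. Hence $\mathcal{P}\cap\mathcal{P}^{-1}$ is a conjugation--invariant subgroup, i.e. a normal subgroup of $G$. Using the simplicity of $\Cont_0(M,\xi)$ for closed $M$ (a theorem of Rybicki and Tsuboi), this normal subgroup is either $\{\mathrm{id}\}$ or all of $G$; this explains the dichotomy between a genuine partial order and the total relation $\mathcal{P}=G$, and since the Reeb flow (Hamiltonian $\equiv 1>0$) gives $\mathcal{P}\neq\{\mathrm{id}\}$, any antisymmetric instance is automatically a non--trivial order.

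For the implication that a positive loop destroys antisymmetry I would show it forces $\mathcal{P}=G$. Given any $g\in G$, pick a path $\Phi_t$ from $\mathrm{id}$ to $g$ with bounded Hamiltonian $F_t$ and bounded conformal factor $f_t$. Reparametrising the strictly positive loop so that it is traversed $N$ times over $[0,1]$ multiplies its Hamiltonian by $N$, producing a loop $\Psi_t$ with Hamiltonian $G_t\geq Nc$, where $c=\min G^0>0$ is available by compactness of $M\times\S^1$. The composition formula $H(\Phi_t\circ\Psi_t)=F_t+e^{-f_t}G_t(\Phi_t^{-1}(\cdot),t)$ then gives a Hamiltonian bounded below by $-\sup|F_t|+e^{-\sup f_t}Nc$, which is strictly positive once $N$ is large. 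Since $\Phi_t\circ\Psi_t$ still joins $\mathrm{id}$ to $g$, we get $g\in\mathcal{P}$, so $\mathcal{P}=G$ and $\geq$ is not a partial order.

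The converse is the delicate direction, and I would argue by contraposition: if $\geq$ is not antisymmetric there is $h\neq\mathrm{id}$ with $h,h^{-1}\in\mathcal{P}$, and concatenating a non--negative path to $h$ with the left translate by $h$ of a non--negative path to $h^{-1}$ yields a non--constant loop with non--negative Hamiltonian. The main obstacle is the upgrade from this non--negative loop to a \emph{strictly} positive one, which is exactly where the hypotheses ``non--negative'' (order) and ``strictly positive'' (loop) must be reconciled. Here I would use that the loop is positive on some open set $V\subset M\times\S^1$ and that $\Cont_0(M,\xi)$ acts transitively on $M$: conjugating by suitable $\psi_j$ translates the positive set in the $M$--direction (conjugation formula), time--shifting moves it in the $\S^1$--direction, and composing finitely many such non--negative loops adds their Hamiltonians (composition formula, all summands non--negative). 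By compactness of $M\times\S^1$ a finite collection can be chosen so that at every $(p,t)$ at least one summand is positive, producing a strictly positive loop. This covering step is the technical heart of the argument.

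Finally, the statement for $\widetilde{G}$ follows the same scheme with the cone $\widetilde{\mathcal{P}}\subset\widetilde{G}$ and with ``loop'' replaced throughout by ``contractible loop''. A failure of antisymmetry produces $\tilde h\neq\widetilde{\mathrm{id}}$ with $\tilde h,\tilde h^{-1}\in\widetilde{\mathcal{P}}$, and the associated non--negative loop represents $\tilde h\cdot\tilde h^{-1}=\widetilde{\mathrm{id}}$, hence is contractible; conversely a strictly positive contractible loop forces $\widetilde{\mathcal{P}}=\widetilde{G}$ by the same composition estimate. One checks that conjugation, time--shift and composition preserve the based homotopy class, so the upgrade to a strictly positive loop stays within contractible loops. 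Since $\widetilde{G}$ is a central extension of $G$ and need not be simple, I would not appeal to the simplicity dichotomy here but run the cone argument directly, which suffices for both equivalences.
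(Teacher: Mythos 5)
Your argument is a faithful reconstruction of the original Eliashberg--Polterovich proof of this criterion, which the paper itself does not prove but simply quotes from \cite{EP}: the right-invariant cone $\mathcal{P}$ with antisymmetry $\Leftrightarrow$ $\mathcal{P}\cap\mathcal{P}^{-1}=\{\mathrm{id}\}$, the iteration-and-composition estimate showing a strictly positive loop forces $\mathcal{P}=G$, the upgrade of a non-constant non-negative loop to a strictly positive one via conjugation, time-shift and compactness (this is exactly \cite{EP}, Prop.~2.1.B, where the wrinkle you flag --- that later summands in the composition formula are evaluated at flow-displaced points --- is handled by choosing the conjugators inductively), and the transfer to $\widetilde{G}$ by tracking contractibility through these operations. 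The one detour is the appeal to simplicity of $\Cont_0(M,\xi)$, which is both unnecessary and anachronistic relative to \cite{EP}: the dichotomy it ``explains'' plays no role in the proof, since non-triviality of the relation already follows from your Reeb-flow observation and the failure of antisymmetry is derived directly from the cone argument.
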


Theorem \ref{thm:main} implies the existence of non--strongly orderable overtwisted contact 3--folds. These are the first examples relating overtwisted 3--folds to orderability.

\section{Locally autonomous loops of contactomorphisms}\label{sec:pos}

Let $(M,\xi)$ be a contact 3--fold and $\alpha$ an associated contact form. The fibered connected sum along a tranverse knot has been described in Subsection \ref{ssec:trans}. The aim of this section is to introduce the property of a positive loop of contactomorphisms that allows us to obtain a loop of contactomorphisms in the fibered connected sum $(M,\xi)\#(\S^1\times\S^2,\xi_{st})$.\\

This appropriate class of loops are the locally autonomous loops, described as follows. Let $p\in M$ be a point and $\{\phi_{t}\}$ a positive loop of contactomorphisms generated by a Hamiltonian $F_t$.
\begin{definition}\label{def:loc}
The loop $\{\phi_t\}$ is said to be locally autonomous at $p$ if there exists $\Op(p)$ such that
$$F(\phi_t(q),t_0)=F(\phi_t(q), t_1),\quad \forall t, t_0, t_1 \in \S^1\mbox{ and }\forall q\in \Op(p)$$
and the map $\phi_t(p):\S^1\longrightarrow M$ is an embedding.
\end{definition}
Observe that this definition does not depend on the choice of contact form $\alpha$ for $\xi$. The local autonomy at $p$ is equivalent to $\dot{\phi}_t$ being time--independent on the trajectories passing through $\Op(p)$ and a positive loop that is locally autonomous at any point of the manifold is time--independent.\\

There exists also a normal form for the contact structure in a neighborhood of the orbit of the point $p$. It is used in order to glue the dynamics in a fibered connected sum. The normal form is the content of the following

\begin{proposition}\label{lem:key}
Let $\{\phi_t\}$ be a locally autonomous loop around $p$. Then there exist a constant $\rho\in\R^+$, a tubular neighborhood $T_p$ of the orbit through $p$ and a contactomorphism
$$\psi:(\S^1\times D^2(\rho), \ker\{\alpha_0= d\theta+r^2 d\varphi \})\longrightarrow(T_p,\xi|_{T_p})\mbox{ such that }\alpha_0(\psi^*\dot{\phi_t})=1.$$
\end{proposition}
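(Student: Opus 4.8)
The plan is to construct the contactomorphism $\psi$ by combining two pieces of data: the embedding of a transverse-knot neighborhood coming from Subsection \ref{ssec:trans}, and a reparametrization adapted to the dynamics $\dot\phi_t$. First I would observe that the orbit $\kappa(\theta)=\phi_\theta(p)$ is, by the local autonomy hypothesis, an embedded loop in $M$, and moreover it is a transverse knot: since $\{\phi_t\}$ is a \emph{positive} loop, its generating Hamiltonian satisfies $F(\phi_t(p),t)=\alpha(\dot\phi_t)>0$ along the orbit, so $\dot\phi_t$ is positively transverse to $\xi$ and hence the orbit is a positive transverse knot. Applying the normal form of Subsection \ref{ssec:trans} to $\kappa$ with a suitable frame $\tau$ therefore already yields a contact embedding
$$\phi:(\S^1\times D^2(R),\ker\alpha_0)\longrightarrow(T_p,\xi|_{T_p}),$$
with $\phi(\theta,0,0)=\kappa(\theta)$. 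The content of the proposition beyond this standard normal form is the extra condition $\alpha_0(\psi^*\dot\phi_t)=1$, i.e. that after pulling back, the generating vector field becomes exactly the Reeb field $R_{\alpha_0}=\partial_\theta$ of $d\theta+r^2d\varphi$.

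Next I would reduce the proposition to producing a time-independent model. The local autonomy says precisely that on the trajectories through $\Op(p)$ the vector field $\dot\phi_t$ is time-independent: the value $F(\phi_t(q),t)$ does not depend on $t$ for $q\in\Op(p)$. Consequently $\dot\phi_t$ restricts to an \emph{autonomous} contact vector field $X$ on the tube $T_p$, whose orbit through $p$ is the knot $\kappa$ traversed in time $1$. I would then normalize the contact form so that $X$ becomes a strict Reeb field: choose the contact form $\alpha'=\alpha/F$ on $T_p$ (possible since $F>0$ there), so that $\alpha'(X)=1$. A short computation with the contact equations recorded in Subsection \ref{ssec:loop} shows that $i_X\alpha'=1$ and $i_X d\alpha'=0$, so $X=R_{\alpha'}$ is exactly the Reeb vector field of $\alpha'$. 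Thus the problem becomes: find a strict contactomorphism from $(\S^1\times D^2(\rho),\alpha_0)$ to $(T_p,\alpha')$ carrying $\partial_\theta=R_{\alpha_0}$ to $X=R_{\alpha'}$.

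The construction of $\psi$ then proceeds by a Reeb-flow-equivariant Moser/normal-form argument. I would first use the embedding $\phi$ to identify a model $\S^1\times D^2(R)$ in which the $\S^1$-coordinate is the $X$-time, so that $X=\partial_\theta$ on the nose; this is where one exploits that $\kappa$ is embedded and that the flow of $X$ is periodic of period $1$. On the transverse disk $\{\theta=0\}$ the contact form $\alpha'$ restricts to a form whose kernel is the contact distribution, and I would apply the relative Darboux/Gray stability theorem \emph{equivariantly with respect to the $\theta$-translation}, deforming $\alpha'$ to $\alpha_0=d\theta+r^2d\varphi$ through a family of strict contact forms all sharing the Reeb field $\partial_\theta$. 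Because every form in the family has $\partial_\theta$ as its Reeb field, the resulting isotopy commutes with the Reeb flow and hence the limiting contactomorphism $\psi$ automatically satisfies $\alpha_0(\psi^*\dot\phi_t)=\alpha_0(\psi^* X)=\alpha_0(\partial_\theta)=1$, for a possibly smaller radius $\rho\le R$.

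The main obstacle I anticipate is precisely maintaining the Reeb field \emph{fixed} throughout the normalization, rather than merely getting a contactomorphism to the standard $(\S^1\times D^2(\rho),\alpha_0)$ that sends Reeb to Reeb only up to conformal factor. A generic Gray stability gives $\psi^*\alpha'=e^{g}\alpha_0$ with $g$ not identically zero, which would yield $\alpha_0(\psi^*X)=e^{-g}$ rather than $1$. To force the strict (conformal factor $\equiv 1$) statement one must carefully choose the interpolating family of forms to preserve $\partial_\theta$ as Reeb field at every stage, which pins down the transverse derivative of $g$ along the $\theta$-flow and ultimately forces $g\equiv 0$ on the invariant neighborhood; handling this invariance while shrinking to a genuine tubular neighborhood of the orbit is the delicate point. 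The positivity $F>0$ and the autonomy are exactly what make the rescaling $\alpha'=\alpha/F$ well defined and $\theta$-invariant, so I expect both hypotheses of Definition \ref{def:loc} to be used essentially here.
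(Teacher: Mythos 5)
Your proposal is correct in outline and shares the paper's two essential ideas, but it finishes by a genuinely different mechanism. Like the paper, you rescale the contact form by the positive, locally time--independent Hamiltonian, $\eta=\alpha/F$, so that on the flow--saturated tube the field $\dot\phi_t$ becomes the Reeb field of $\eta$, and you use $\phi_1=\mathrm{id}$ together with embeddedness of the orbit to pass to flow coordinates in which $\dot\phi_t=\partial_\theta$. The paper, however, then avoids any Gray/Moser deformation altogether: it applies the \emph{strict} Darboux theorem to $\eta$ at the point $p$, obtaining a strict chart $f$ on a flow box inside $\Op(p)$, and transports it by the flow, $\psi(\theta,x)=\phi_\theta(f(\Psi_{-\theta}(\theta,x)))=\phi_\theta(f(0,x))$; since $\phi_\theta$ is a strict contact flow on the tube (it is the Reeb flow of $\eta$ there) and $f$ is strict, $\psi^*\eta=\alpha_0$ holds automatically, with well--definedness on $\S^1$ exactly because $\phi_1=\mathrm{id}$. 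Your route instead invokes the transverse--knot neighborhood theorem --- which is in fact redundant, since it gives a normal form only up to conformal factor and you discard it upon passing to flow coordinates --- and then an ``equivariant Gray stability'' step whose delicate point you flag but do not execute. That step is workable, and in fact simpler than you fear: since $\partial_\theta$ is the Reeb field of $\alpha'$, one has $\SL_{\partial_\theta}\alpha'=0$, so in flow coordinates $\alpha'=d\theta+\beta$ with $\beta$ a $\theta$--independent $1$--form on the disk (vanishing at the origin if the initial slice is chosen tangent to $\xi_p$) and $d\beta$ a positive area form; a two--dimensional Moser argument matching $d\beta$ with $2r\,dr\wedge d\varphi$, followed by the shear $(\theta,x)\mapsto(\theta-g(x),x)$ absorbing the exact remainder $h^*\beta-r^2d\varphi=dg$, yields the strict equivalence --- there is no mysterious ``forcing $g\equiv 0$'', and no conformal factor ever appears. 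Note also that for the stated conclusion $\alpha_0(\psi^*\dot\phi_t)=1$ it suffices that $\psi$ be strict for $\eta$, since $\eta(\dot\phi_t)=1$ on the tube; equivariance of the normalization is not actually required. In sum: the paper's transport trick buys brevity and automatic strictness, while your reduction to an invariant two--dimensional Moser problem is slightly longer but transparent and, once the redundant first step is removed and the Moser/shear step written out, gives a complete alternative proof.
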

\begin{proof}
Consider the contact form $\eta=\alpha/F_0$. The contact Hamiltonian $H_t$ associated to the loop $\{\phi_t\}$ with respect to $\eta$ satisfies $H_0=1$, and thus the contact vector field $X_t$ coincides with the Reeb field $R_\eta$ at $t=0$. The strict Darboux Theorem (\cite[Section 2.5]{Ge}) implies the existence of a constant $\rho\in\R^+$, a neighborhood $U_p$ and a strict contactomorphism
$$f:((-\varepsilon,\varepsilon)\times D^{2n}(\rho),\alpha_0)\longrightarrow (U_p,\eta).$$
We can suppose that the neighborhood $U_p$ is contained in the neighborhood $\Op(p)$ provided by Definition \ref{def:loc}. Since the Reeb flow is a strict contactomorphism and the flow $\phi_t$ is locally autonomous on $\Op(p)$, the flow $\phi_t$ coincides with the Reeb flow in $\Op(p)$, and hence it is a strict contact flow.\\

The positive loop $\Psi_t$ generated by the Reeb field on $(\S^1\times D^{2n}(\rho),\ker\{\alpha_0\})$ is $\Psi_t(\theta,x)=(\theta+t,x)$. The neighborhood $T_p$ is obtained through the flow of $U_p$ and the contactomorphism $\psi$ is the strict contact embedding
\begin{eqnarray*}
\psi:\S^1\times D^{2n}(\rho) &\longrightarrow&  M \\
(\theta,x) & \longmapsto & \phi_{\theta}(f(\Psi_{-\theta}(\theta,x))).
\end{eqnarray*}
\end{proof}

Consider two contact 3--folds $(M,\xi)$ and $(N,\eta)$ with positive loops of contactomorphisms $\Phi_t$ and $\Psi_t$ locally autonomous at $p\in M$ and $q\in N$. Let $\gamma$ and $\kappa$ be the orbits of $p$ and $q$ with respect to $\Phi_t$ and $\Psi_t$, which come equipped with natural framings provided by Proposition \ref{lem:key} . The fibered connected sum along $\gamma$ and $\kappa$, introduced in Subsection \ref{ssec:trans}, admits a positive loop of contactomorphisms.\\

This positive loop is defined as $\Phi_t$ and $\Psi_t$ in $M\setminus\Op(\gamma)$ and $N\setminus\Op(\kappa)$, considered as submanifolds of $(M,\xi)\#(N,\eta)$ and extended to the gluing region of $(M,\xi)\#(N,\eta)$ with each of the two loops of contactomorphisms. In detail, Proposition \ref{lem:key} provides a normal form for both neighborhoods $\Op(\gamma)$ and $\Op(\kappa)$. This allows us to glue the two corresponding Hamiltonians $H(\Phi_t)$ and $H(\Psi_t)$ in their local normal form, both being constant on the gluing region and thus coinciding at $\S^1\times\{0\}\times\S^1\subset A_R$.  This positive loop of contactomorphisms of $(M,\xi)\#(N,\eta)$ is denoted by $\Phi_t\#\Psi_t$.\\


The proof of Theorem \ref{thm:main} consists of this construction applied to the manifold $(\S^1\times\S^2,\xi_{st})$ with an appropriate loop of contactomorphisms. The overtwistedness of the resulting contact structure follows from Subsection \ref{ssec:lutz}. Section \ref{sec:sum} provides this loop and concludes Theorem \ref{thm:main}.

\section{Proof of the main result}\label{sec:sum}

In this section Propositions \ref{prop:s1s2} and \ref{prop:autos1s2} are used to prove Theorem \ref{thm:main}.

Consider coordinates $(\theta;r,\varphi,z)\in\S^1\times\R^3$ and the contact form $\alpha_{st}=zd\theta+r^2d\varphi$ on the manifold $\S^1\times\S^2=\{(\theta;r,\varphi,z):r^2+z^2=1\}\subset\S^1\times\R^3$. We can define the two solid tori
$$\T_1=\S^1\times\D^2=\{(\theta;r,\varphi,z):r^2+z^2=1,z\geq0\},\quad \T_2=\S^1\times\D^2=\{(\theta;r,\varphi,z):r^2+z^2=1,z\leq0\}.$$
There exists a non--negative autonomous Hamiltonian $R_t:\S^1\times\S^2\longrightarrow\R$ defined as $R_t(\theta;r,\varphi,z)=r^2$ which generates the non--negative loop of contactomorphisms $\{\rho_t\}$ given by
$$\rho_t(\theta;r,\varphi,z)=(\theta;r,\varphi+t,z).$$

A second autonomous Hamiltonian is also central to our construction. It is the Hamiltonian $Z_t:\S^1\times\S^2\longrightarrow\R$ defined as $Z_t(\theta;r,\varphi,z)=z$ whose associated loop of contactomorphisms $\{\zeta_t\}$ is $$\zeta_t(\theta;r,\varphi,z)=(\theta+t;r,\varphi,z).$$
It is a loop of strict contactomorphisms, i.e. $\zeta_t^* \alpha_{st} = \alpha_{st}$. \\

The loops $\rho_t$ and $\zeta_t$ are autonomous and commute, however only $\rho_t$ is non--negative. Let us construct a locally autonomous positive loop of contactomorphisms in $(\S^1 \times \S^2,\xi_{st})$. It is obtained in two steps corresponding to the two subsequent Propositions.

\begin{proposition}\label{prop:s1s2}
There exists a loop $\{\beta_t\}\in\Omega\Cont(\S^1\times\S^2,\xi_{st})$ which coincides with $\{\rho_t\circ\rho_t \}$ on the solid torus $\T_1$ and it is positive on the solid torus $\T_2$.
\end{proposition}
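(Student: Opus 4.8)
\emph{Strategy.} I would realise $\beta_t$ as a composition $\beta_t=\rho_{2t}\circ\sigma_t$, where $\rho_{2t}=\rho_t\circ\rho_t$ is the doubled rotation (generated by $2r^2$) and $\sigma_t$ is a loop of contactomorphisms supported in the interior of $\T_2$, that is, equal to the identity on a neighbourhood of $\T_1$ and of $\partial\T_2$. Since $\rho_t$ is a strict contactomorphism, $\rho_t^*\alpha_{st}=\alpha_{st}$, the composition formula of Subsection~\ref{ssec:loop} gives $H(\beta_t)=2r^2+H(\sigma_t)\circ\rho_{2t}^{-1}$. Because $\rho_{2t}^{-1}$ preserves $r$ and fixes the poles, on $\T_1$ one has $\sigma_t=\mathrm{id}$ and hence $\beta_t=\rho_{2t}$, while on $\T_2$ the term $2r^2$ is non-negative and vanishes only along the south pole circle $\{z=-1\}$. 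Thus it suffices to produce $\sigma_t$ with $H(\sigma_t)\ge 0$ everywhere and $H(\sigma_t)>0$ along $\{z=-1\}$: then $H(\beta_t)>0$ on all of $\T_2$, and $\beta_1=\rho_2\circ\sigma_1=\mathrm{id}$. This reduces the Proposition to constructing a single positive loop on the solid torus $\T_2$, relative to its boundary.

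\emph{The obstruction.} The first thing I would verify is that $\sigma_t$ cannot be chosen among the ``fibrewise'' loops, namely those whose flow preserves every level set $\{z=c\}$. A time--dependent Hamiltonian depending only on $z$ has contact vector field with vanishing $\partial_z$--component, so its flow is $(\theta,\varphi,z)\mapsto(\theta+\int_0^1\! A_t(z)\,dt,\ \varphi+\int_0^1\! C_t(z)\,dt,\ z)$ at time one; closing up forces both integrated windings to lie in $\Z$, hence, being continuous in $z$, to be constant. Matching $\rho_{2t}$ on $\T_1$ then pins these windings to $0$ and $2$ on all of $\S^1\times\S^2$. At the south pole $\alpha_{st}=-d\theta$, so the generating Hamiltonian there equals minus the $\theta$--velocity; positivity along the core demands this velocity be strictly negative at every time, which is incompatible with its vanishing time--integral. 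Consequently the desired loop \emph{must} move the $z$--coordinate, and near the core it must rotate the core circle exactly once in the negative $\theta$--direction. Isolating this dichotomy cleanly is, to my mind, the conceptual core of the argument.

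\emph{Construction.} Guided by the obstruction, I would build $\sigma_t$ so that it drags the core knot $\{z=-1\}$ once around in the $-\theta$ direction while remaining the identity near $\partial\T_2$. In the transverse normal form near the core (a positive transverse knot, with neighbourhood contactomorphic to $(\S^1\times D^2,\ker(d\theta+r^2d\varphi))$) the Reeb direction at the core is $-\partial_\theta$, so any motion pushing the core positively transverse to $\xi_{st}$ has strictly positive Hamiltonian there. I would take an explicit time--dependent Hamiltonian supported in a collar $\{-1\le z\le -1+\varepsilon\}$ disjoint from $\partial\T_2$, whose flow advances $\theta$ by $-1$ on the core and interpolates the total $\theta$--winding down to $0$ as $z$ grows, the reconciliation forced by the obstruction being absorbed in a controlled shuffling of the intermediate $z$--levels; outside the collar $\sigma_t$ is the identity and $\rho_{2t}$ supplies the remaining non-negative motion.

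\emph{Main obstacle.} The delicate point is to realise this collar loop as a \emph{genuine} loop: the $z$--level shuffling that the obstruction forces must be exactly $1$--periodic in time, while the generating Hamiltonian stays non-negative (and strictly positive on the core) throughout and tapers smoothly to $0$ at $z=-1+\varepsilon$. I expect the most economical way to secure periodicity and positivity simultaneously is to present $\sigma_t$ on the collar as the Reeb flow of a rescaled form $\alpha_{st}/f$, with $f>0$ chosen so that the rescaled Reeb orbits on the collar are all closed of period one; such a flow necessarily breaks the $z$--symmetry (consistently with the obstruction), and its Hamiltonian with respect to $\alpha_{st}$ is exactly $f>0$, making positivity automatic. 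The residual difficulty, and where I would concentrate the technical work, is arranging this closed--orbit (Zoll--type) condition together with the smooth matching to $\rho_{2t}$ and to the identity at the two ends of the collar.
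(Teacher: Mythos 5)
Your reduction $\beta_t=\rho_{2t}\circ\sigma_t$ is formally fine (since $\rho_t$ is strict, the composition formula gives exactly what you claim), but the loop $\sigma_t$ you then need \emph{does not exist}, so the plan fails at its core rather than at a ``residual'' technical point. By the transverse neighbourhood theorem your collar $\{-1\le z\le -1+\varepsilon\}$ is contactomorphic to the standard model $(\S^1\times D^2(R),\ker(d\theta+r^2d\varphi))$, and since $r^2d\varphi=x\,dy-y\,dx$ and $d(xy)$ can be absorbed into the $\S^1$--coordinate, this model is a compact piece of $(\R^2\times\S^1,\ker(dz-y\,dx))$. A loop supported there with $H(\sigma_t)\ge 0$, not identically zero, would thus be a nonconstant non--negative loop of \emph{compactly supported} contactomorphisms of $\R^2\times\S^1$. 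Picking $t_0$ with $\sigma_{t_0}\neq\mathrm{id}$, the two halves of the loop are non--negative paths from $\mathrm{id}$ to $\sigma_{t_0}$ and from $\sigma_{t_0}$ back to $\mathrm{id}$, so antisymmetry of the relation $\ge$ fails; this contradicts the orderability of $\R^{2n}\times\S^1$ obtained by generating--function methods \cite{Bh,Sa} (cf.\ also \cite{EKP}). Equivalently: such a tube--supported loop could be implanted along a transverse knot in \emph{any} contact 3--fold, and \cite[Prop.~2.1.B]{EP} would upgrade the resulting non--negative nonconstant loop to a positive one on, say, $ST^*\T^2$, contradicting \cite{CN2}. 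Your ``obstruction'' paragraph correctly detects this for fibrewise loops, but the obstruction is global and is not circumvented by shuffling $z$--levels. Your proposed mechanism is also internally inconsistent on its own terms: the Reeb flow of $\alpha_{st}/f$ has contact Hamiltonian $f>0$ everywhere, so it can never taper to the identity at the ends of the collar while remaining a loop with non--negative Hamiltonian.

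The paper avoids exactly this trap by never trying to manufacture positivity with local support. It sets $\beta_t=\rho_t\circ(\psi\circ\rho_t\circ\psi^{-1})$, where $\psi$ is a contactomorphism supported in a small neighbourhood of the south core $\gamma(\theta)=(-\theta;0,0,-1)$ displacing $\gamma$ off itself (built by cutting off the Hamiltonian $H=x$ in a local model $\ker(d\theta+x\,dy)$). Each factor is a genuine loop --- a conjugate of a loop is a loop --- so no periodicity or matching problem ever arises. The Hamiltonian is $r^2(p)+e^{-f}r^2\bigl((\rho_t\circ\psi)^{-1}(p)\bigr)$: both summands are non--negative, and on $\T_2$ their zero sets (the core circle and its displaced image) are disjoint, giving positivity there, while on $\T_1$ the conjugation acts trivially and $\beta_t=\rho_t\circ\rho_t$. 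In short, the positivity along the core is \emph{borrowed from the global non--negative loop} $\rho_t$ via a displacement trick (the argument of \cite[Prop.~2.1.B]{EP} and \cite[Prop.~2.3]{Gi2}), which is precisely the step your compactly supported $\sigma_t$ cannot replicate.
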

The proof follows closely the argument of \cite[Prop. 2.1.B]{EP} and \cite[Prop. 2.3]{Gi2}.
\begin{proof}
Consider the transverse knot $\gamma(\theta)=(-\theta;0,0, -1)$ in $(\S^1\times\S^2,\xi_{st})$. Suppose that for a small enough neighborhood $\Op(\gamma)$ there exists a contactomorphism $\psi\in\Cont(\S^1\times\S^2,\xi_{st})$ supported in $\Op(\gamma)$ and such that $\gamma\cap\psi(\gamma)=\emptyset$. Then the loop $\beta_t =\rho_t\circ\psi\circ\rho_t\circ\psi^{-1}$ coincides with $\rho_t\circ\rho_t$ on $\T_1$ and its Hamiltonian
$$H(\beta_t)=R_t(p,t)+H(\psi\circ\rho_t\circ\psi^{-1})(\rho_t^{-1}(p),t)=R_t(p,t)+e^{-f}R_t((\rho_t\circ\psi)^{-1}(p),t)$$
is positive on $\T_2$ since at least one of the two summands is strictly positive. In the above formula $f\in C^\infty(M)$ is such that $\psi^*\alpha=f\alpha$, and confer Subsection \ref{ssec:loop} for the expression of the Hamiltonian. Let us show the existence of the contactomorphism $\psi$.\\

Let $\varepsilon\in\R^+$ be small enough, $(\theta, x,y)\in \S^1 \times D^2_\varepsilon$ local coordinates and $g:\S^1 \times D^2_\varepsilon\longrightarrow\Op(\gamma)$ a local chart such that $\ker g^* \alpha_{st}= \ker\{d\theta+xdy\}$. It suffices to construct the compactly supported contactomorphism $\psi$ in this local model $\S^1\times D^2_\varepsilon$. The contact vector field $\partial_y$ is generated by the Hamiltonian $H(\theta; x,y)= x$. This Hamiltonian can be cut--off to a smooth Hamiltonian
$$\widetilde{H}:\S^1\times D^2_\varepsilon\longrightarrow\R\mbox{ such that } \widetilde{H}= H\mbox{ on }\S^1\times D^2_{\varepsilon/4}\mbox{ and  }\widetilde{H}=0\mbox{ on }\S^1 \times (D^2_{\varepsilon} \setminus  D^2_{3\varepsilon/4}).$$
The flow generated by $\widetilde{H}$ exists for $\tau\in\R^+$ small enough, and for one such $\tau$ we can define the contactomorphism $\psi$ to be the $\tau$--time flow.\end{proof}
%
%

\begin{proposition}\label{prop:autos1s2}
The loop $\delta_t=\zeta_t \circ (\beta_t \odot \stackrel{k)}{\cdots} \odot \beta_t)$ in $\Cont_0(\S^1\times\S^2,\xi_{st})$ is locally autonomous at any point of the open set $\mathring{\T}_1$ and positive on $(\S^1\times\S^2,\xi_{st})$ for $k\in\Z^+$ large enough.
\end{proposition}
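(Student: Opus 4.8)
```latex
The plan is to verify the two claimed properties of $\delta_t = \zeta_t \circ (\beta_t \odot \cdots \odot \beta_t)$ separately, treating local autonomy on $\mathring{\T}_1$ first and global positivity second. Throughout I would use the composition and concatenation formulas from Subsection \ref{ssec:loop}, together with the fact that $\zeta_t$ is a loop of \emph{strict} contactomorphisms, so $\zeta_t^*\alpha_{st} = \alpha_{st}$ and the exponential factor $e^{-f_t}$ in the composition formula is identically $1$.

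First I would analyze the loop on $\mathring{\T}_1$. By Proposition \ref{prop:s1s2}, $\beta_t$ coincides with $\rho_t\circ\rho_t$ on $\T_1$, so the $k$--fold concatenation $\beta_t\odot\cdots\odot\beta_t$ restricts on $\mathring{\T}_1$ to a concatenation of the autonomous loop $\rho_t\circ\rho_t$ with itself. Since $\rho_t(\theta;r,\varphi,z)=(\theta;r,\varphi+t,z)$ preserves $\T_1$ and is generated by the autonomous Hamiltonian $R_t=r^2$, each concatenation stage moves only in the $\varphi$--direction with a time--independent vector field, and the composition with $\zeta_t$ (which translates only in $\theta$) is again autonomous there. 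The key computation is that on $\mathring{\T}_1$ the generating Hamiltonian of $\delta_t$ is, up to the $e^{-f_t}=1$ factor, a sum $z + (\text{const})\cdot r^2$ evaluated along the orbit; because both summands are $\theta$-- and $\varphi$--invariant and the flow stays inside $\mathring{\T}_1$, the value of the Hamiltonian along any trajectory through a point of $\mathring{\T}_1$ is independent of time, which is exactly the condition in Definition \ref{def:loc}. I would also check that the orbit of an interior point is an embedded circle (the $z>0$ coordinate is preserved and the $(\theta,\varphi)$ motion is a linear flow on a torus), giving local autonomy at each point of $\mathring{\T}_1$.

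Second I would establish positivity everywhere on $\S^1\times\S^2$. The composition formula gives
$$
H(\delta_t)(p,t) = Z_t(p,t) + e^{-f_t}\,C_t(\zeta_t^{-1}(p),t),
$$
where $C_t$ is the generating Hamiltonian of the $k$--fold concatenation of $\beta_t$ and $f_t$ is determined by $\zeta_t^*\alpha_{st}=e^{f_t}\alpha_{st}$; since $\zeta_t$ is strict, $e^{-f_t}=1$. The summand $C_t$ is the concatenation Hamiltonian built from $H(\beta_t)$, which by Proposition \ref{prop:s1s2} is $2R_t=2r^2\geq 0$ everywhere and strictly positive on $\T_2$. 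The term $Z_t = z$ is strictly positive on the interior of $\T_1$ (where $z>0$) but vanishes on the equator and is negative on $\T_2$, so positivity does not follow from a single summand and this is the main obstacle.

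The way around it is to use $k$ as a free parameter: the concatenation of $k$ copies of $\beta_t$ reparametrizes time so that each copy runs over an interval of length $1/k$, and the resulting Hamiltonian $C_t$ is, at each time, one of the $H(\beta_t)$ sped up by a factor $k$, hence its pointwise \emph{values} are $k$ times larger than those of a single $H(\beta_t)$ at the corresponding rescaled time. Thus on any compact region where $H(\beta_t)$ is bounded below by a positive constant --- in particular on $\overline{\T}_2$ and a neighborhood of the equator, where $r^2$ is bounded away from $0$ --- the contribution of $C_t$ grows linearly in $k$ and eventually dominates the bounded negative contribution of $Z_t=z\geq -1$. I would therefore split $\S^1\times\S^2$ into the region $z\geq c>0$ (where $Z_t$ alone makes $H(\delta_t)$ positive) and the region $z\leq c$ (where $r^2\geq 1-c^2>0$, so $C_t\geq 2k(1-c^2)\cdot(\text{positive reparametrization factor})$ beats $|Z_t|\leq 1$ for $k$ large); choosing $c$ and then $k$ in that order yields $H(\delta_t)>0$ on all of $\S^1\times\S^2$. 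The care needed in the transition region, and in making the ``sped-up by $k$'' estimate precise through the concatenation formula while $\zeta_t$ shifts the point, is the delicate part of the argument.
```
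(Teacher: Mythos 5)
Your route is the same as the paper's: using the strictness of $\zeta_t$ (so $e^{-f_t}=1$) and the time reparametrization in the concatenation, write $H(\delta_t)(p,t)=z(p)+kH(\beta_{kt})(\zeta_t^{-1}(p),t)$ --- and you correctly supply the factor $k$, which the concatenation display in Subsection \ref{ssec:loop} omits --- then let $k$ beat the bound $z\geq-1$ for positivity, while on $\T_1$ the Hamiltonian is the autonomous function $z+2kr^2$, whose flow preserves $\T_1$ and has embedded orbits; this is exactly the paper's proof, and your local-autonomy half is correct as written. However, two of your pointwise claims are false, and one of them breaks your final estimate. First, $H(\beta_t)$ is not $2r^2$ everywhere: Proposition \ref{prop:s1s2} gives $H(\beta_t)=2r^2$ only on $\T_1$; on $\T_2$ it equals $r^2(p)+e^{-f}r^2((\rho_t\circ\psi)^{-1}(p))$, and at the core circle $\{r=0\}\subset\T_2$ the identity $H(\beta_t)=2r^2$ would force $H(\beta_t)=0$, contradicting the strict positivity on $\T_2$ that you also (correctly) invoke. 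Second --- the step that fails --- on the region $\{z\leq c\}$ the bound $r^2\geq 1-c^2$ is wrong: since $r^2=1-z^2$, it holds only on $\{|z|\leq c\}$, and at the south core $z=-1$ one has $r=0$. So your estimate $C_t\geq 2k(1-c^2)\cdot(\text{reparametrization factor})$ collapses precisely near the displaced knot $\gamma$, where the positivity of $H(\beta_t)$ comes not from $r^2(p)$ but from the second summand $e^{-f}r^2((\rho_t\circ\psi)^{-1}(p))$ being positive because $\psi$ displaces $\gamma$.

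The repair is immediate and is what the paper actually does, and what you yourself gesture at in the clause ``on any compact region where $H(\beta_t)$ is bounded below by a positive constant'': by Proposition \ref{prop:s1s2} and compactness of $\T_2\times\S^1$, one has $H(\beta_t)\geq\varepsilon>0$ on $\T_2$, and since $\zeta_t$ preserves the decomposition $\T_1\cup\T_2$, for $p\in\T_2$ this gives $H(\delta_t)(p,t)\geq -1+k\varepsilon>0$ once $k>1/\varepsilon$ --- no appeal to a lower bound on $r^2$ is needed, and hence no splitting at a level $\{z=c\}$ either. On $\T_1$ in turn no largeness of $k$ is required: there $z\geq 0$, $H(\delta_t)=z+2kr^2$, and the constraint $r^2+z^2=1$ forbids $r=z=0$, so the sum is strictly positive for every $k\in\Z^+$. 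With these two substitutions your argument coincides with the paper's; as it stands, the parenthetical justification ``$r^2$ bounded away from $0$'' for $\overline{\T}_2$ and the bound on $\{z\leq c\}$ are genuinely false statements, not mere imprecision.
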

\begin{proof}
The loop $\zeta_t$ preserves the decomposition $\S^1\times\S^2=\T_1\cup\T_2$. The Hamiltonian associated to the loop $\{\beta_t \odot \stackrel{k)}{\cdots}\odot\beta_t\}$ is the smooth function $kH(\beta_{kt})$. The Hamiltonian $H(\beta_t)$ is positive on $\T^2$ and thus for $k$ large enough
$$H(\delta_t)(p,t)=z(p)+kH(\beta_{kt})(\zeta_t^{-1}(p),t)\geq -1+kH(\beta_{kt})(\zeta_t^{-1}(p),t)>0.$$
Therefore the Hamiltonian $H(\delta_t)$ is positive in $\T_2$.\\

In the solid torus $\T_1$, the Hamiltonian $H(\delta_t)|_{\T_1}(\theta;r,\varphi,z)=z+2kr^2$ is positive, autonomous and its flow preserves $\T_1$. This concludes the statement.
\end{proof}
The existence of the loop $\delta_t\in\Omega\Cont(\S^1\times\S^2,\xi_{st})$ in Proposition \ref{prop:autos1s2} implies Theorem \ref{thm:main}.

{\it Proof of Theorem \ref{thm:main}}: The loop of contactomorphisms $\delta_t$ constructed in Proposition \ref{prop:autos1s2} is positive and locally autonomous at $p=(0;0,0,1)$. Consider the transverse knot $\gamma=\{z=1\}=\{(\theta;0,0,1)\}$, this is coincides with the orbit of $\delta_t$ at $p$. Then the loop of contactomorphisms $\phi_t\#\delta_t$ of the fibered connected sum $(M,\xi)\#(\S^1\times\S^2,\xi_{st})$ along $\kappa\#\gamma$ is generated by a positive Hamiltonian. Subsection \ref{ssec:lutz} implies that the construction does not depend on the choice of frames and the resulting contact manifold is $(M,\xi^\kappa)$.\hfill$\Box$

The geometric argument used to prove Theorem \ref{thm:main} should apply to higher dimensional contact manifolds. There is however no explicit example of an overtwisted contact manifold of higher dimension and thus there is no local model in order to glue (neither a general notion of a higher dimensional Lutz twist). 


 

\end{document}